\def\qcut{ q_G^{Cut}} 
\def\qcutnorm{{ q}_G^{NCut}}
\def\qcutrel{q_G^{RCut}}
\def\uno{\mathbb 1}
\def\vol{\mathrm{vol}\,}
\def\t{\text{\itshape{\textsf{T}}}} 
\def\A{{\mathcal A}}
\def\L{{\mathcal L}}
\def\M{{\mathcal M}}
\newtheorem{theorem}{Theorem}[section]
\newtheorem{proposition}[theorem]{Proposition}
\newtheorem{lemma}[theorem]{Lemma}
\newtheorem{corollary}[theorem]{Corollary}
\theoremstyle{definition}
\newtheorem{definition}[theorem]{Definition}
\def\ps@pprintTitle{%
	\let\@oddhead\@empty
	\let\@evenhead\@empty
	\def\@oddfoot{}%
	\let\@evenfoot\@oddfoot}
\begin{document} 

\title{Modularity bounds for clusters located by leading eigenvectors of the normalized  modularity~matrix}

\author{Dario Fasino\fnref{fndf}}

\address{Department of Chemistry, Physics, and Environment\\	University of Udine, Udine, Italy.}

\author{Francesco Tudisco\fnref{fnft}}

\address{Department of Mathematics and Computer Science,\\ Saarland University, Saarbr\"ucken, Germany}

\fntext[fndf]{The work of this author has been partially supported by INDAM-GNCS.}
\fntext[fnft]{The work of this author has been partially supported by the ERC Grant NOLEPRO.}
\begin{keyword}
 Nodal domain; community detection; modularity; Cheeger inequality
 \MSC[2010]05C50, 15A18, 15B99
\end{keyword}

%
%

\begin{abstract}
Nodal theorems for generalized modularity matrices ensure that the cluster located by the positive entries of the leading eigenvector of various modularity matrices induces a connected subgraph.  In this paper we obtain lower bounds for the modularity of that set of nodes showing that, under certain conditions, the nodal domains induced by eigenvectors corresponding to highly positive eigenvalues of 
the normalized modularity matrix have indeed positive modularity, that is they can be recognized as modules inside the network. Moreover we establish  Cheeger-type inequalities for the cut-modularity of the graph, providing a theoretical support to the common understanding that highly positive eigenvalues of modularity matrices are related with the possibility of subdividing a network into communities.
\end{abstract}

\maketitle

\section{Introduction}

The study of community structures in complex networks is facing a significant growth, as observations on real life graphs reveal that many 
social, biological, and technological networks are intrinsically divided into clusters. 
Given a generic graph describing some kind of relationship
among actors of a complex network, community detection problems basically consist in discovering and revealing the groups (if any) in which the network is subdivided.


Modularity matrices, the main subject of investigation of the present work, are a relevant tool in the development of a sound theoretical background of community detection.   
Despite a number of modularity matrices has been proposed so far, see e.g., \cite{FT15} and the references therein, the original and most popular one was introduced by Newman and Girvan in \cite{newman-girvan} and is defined as a particular rank-one correction of the adjacency matrix. We shall refer to such matrix as the 
\textit{Newman--Girvan} (or \textit{unnormalized})
modularity matrix, and we will introduce consequently a normalized version of that matrix.

Spectral algorithms are widely applied to data clustering problems, including finding communities or partitions in graphs and networks. In the latter case, sign patterns in the
entries of certain eigenvectors of Laplacian matrices
are exploited to build vertex subsets, called
{\em nodal domains}, which often yield excellent solutions
to certain combinatorial problems related to 
the optimal partitioning of a given graph or network.

Analogously, nodal domains of modularity matrices play a crucial role in the community detection framework. A nodal domain theorem has been proved for these matrices \cite{FT14,FT15} showing the connectedness properties of nodal domains associated with their eigenvectors. The main results of this paper show that, under certain conditions, the nodal domains induced by eigenvectors corresponding to positive eigenvalues of the \textit{normalized} modularity matrix have indeed positive modularity, that is they can be recognized as modules inside the graph. Moreover we prove two Cheeger-type inequalities for the cut-modularity providing a theoretical support to the common understanding that highly positive eigenvalues of modularity matrices are related with the possibility of subdividing the graph into communities.

The paper is organized as follows. 
After fixing hereafter our notation and preliminary results, 
in Section \ref{sec:community-detection} we introduce with more detail the modularity based community detection problem, motivating our subsequent investigations. In Section \ref{sec:matrices} we discuss the unnormalized and normalized versions of the Newman--Girvan  modularity matrix, summarizing some of their main structural properties.  Subsequently, 
and 
we present our main results, concerning the relation between positive eigenvalues of the normalized modularity matrix and modules inside the graph. In particular in Section \ref{sec:main} we prove two Cheeger-type inequalities for the cut-modularity of the graph. Section \ref{sec:more} contains complementary results on 
modularity properties of nodal domains corresponding to
positive eigenvalues of the normalized modularity matrix.
We devote a brief final section to few relevant concluding remarks.

\subsection{Notations and preliminaries}
Hereafter, we give a brief review of standard concepts and symbols from algebraic graph theory that we will use throughout the paper.
We assume that $G = (V, E)$ 
is a simple connected graph, i.e., a
finite, undirected, unweighted graph 
without multiple edges, where
$V$ and $E$ are the vertex and edge sets. 
We always identify $V$ with $\{1,\dots,n\}$. 
We denote adjacency of vertices $x$ and $y$ interchangeably as 
$x \sim y$ or $xy\in E$.
Further definitions are listed
hereafter:

\begin{itemize}

\item For any $i\in V$, let $d_i$ denote its 
degree. 
Moreover, we let 
$d = (d_1,\ldots,d_n)^\t$, 
$\delta=(\sqrt{d_1},\ldots,\sqrt{d_n})^\t$,
$D = \mathrm{Diag}(d_1,\ldots,d_n)$.

\item The symbols $A$ and $\A$  denote 
the adjacency matrix of $G$ and its normalized counterpart, 
that is, $A\equiv(a_{ij})$
where $a_{ij} = 1$ if $ij \in E$, and $a_{ij}=0$ otherwise;
and $\A = D^{-1/2}AD^{-1/2}$. In particular, 
both $A$ and $\A$ are symmetric, irreducible, componentwise nonnegative matrices.

\item $\uno$ denotes the vector of all ones whose dimension depends on the context.

\item The cardinality of a set $S$ is denoted by $|S|$. 
In particular, 
$|V|=n$.

\item For any $S \subseteq \{1,\dots,n\}$ let $\uno_S$ be its \textit{characteristic vector}, defined as $(\uno_S)_i =1$ if $i \in S$ and $(\uno_S)_i=0$ otherwise.
Moreover, we denote by
$\bar S$ the complement $V \setminus S$, and let $\vol S = \sum_{i \in S}d_i$ be the {\em volume} of $S$. Correspondingly,  
$\vol V= \sum_{i \in V}d_i$ denotes the volume of the whole graph.






\item
For any subsets $S,T\subseteq V$ let
$$
   e(S,T) 
   = \uno_S^\t A \uno_T .
$$ 
For simplicity, we 
use the shorthands $e_{\mathrm{in}}(S) = e(S,S)$
and $e_{\mathrm{out}}(S) = e(S,\bar S)$,
so that $e_{\mathrm{in}}(S)$
is (twice) the number of inner-edges in $S$
and $e_{\mathrm{out}}(S)$ is the size of the edge-boundary of $S$.
We have also
$$
   \vol S = e_{\mathrm{in}}(S) + e_{\mathrm{out}}(S) .
$$

\item
A complete multipartite graph is a graph whose vertices
can be partitioned into pairwise disjoint subsets
$V_1,\ldots,V_k$ such that an edge
exists if and only if the two extremes belong to different
subsets, see e.g., \cite{Majstorovic2014}. In particular, if $k = n$ then $G$ is a complete graph,
while if $k=2$ and $V_1$ is a singleton then $G$ is a star.

\end{itemize}

\section{The community detection problem}
\label{sec:community-detection}

The discovery and description of communities in a graph is a central problem in modern graph analysis. Intuition suggests that a community (or cluster) in $G$ should be a possibly connected group of nodes whose internal edges outnumber those with the rest of the network. However there is no formal
definition of community. A survey of several proposed definitions of community can be found in \cite{santo-fortunato}, nonetheless as the author of that paper therein underlines, the global definition based on the modularity quality function is by far the most popular one. The modularity function was proposed by Newman and Girvan in \cite{newman-girvan} as a possible measure 
to quantify how much a given subset $S\subset V$ is a 
``good cluster''.
They postulate that $S$ is a cluster of nodes in $G$ if the induced subgraph $G(S)$ contains more edges than expected, if
edges were placed randomly.
Thus, they introduce the modularity function $Q(S)$ to  measure the difference between the actual and the expected number of edges in $G(S)$ so that a subset is a cluster if it has positive modularity. 
The precise definition 
is given by the following equivalent formulas:
\begin{equation}   \label{eq:def_Q(S)}
   Q(S) = e_{\mathrm{in}}(S) - \frac{(\vol S)^2}{\vol V}
   =\frac{\vol S \, \vol \bar S}{\vol V} - e_{\mathrm{out}}(S) .
\end{equation}
Note the equalities $Q(S) = Q(\bar S)$ and $Q(V) = 0$.
Undoubtedly, the modularity of a vertex set is
one of the most efficient indicators of its consistency as 
a community in $G$. For that reason, 
it is common practice to adopt the following definition:
\begin{definition}\label{def:mod-comm}
A subgraph of $G$ is a \textit{module} if its vertex set $S$ has positive modularity. If no ambiguity may occur, $S$ is called a module itself. 
\end{definition}

The usefulness of the previous definition
lies in the fact that, in practice,
if $G(S)$ is a connected module whose size is significant, 
then it can be recognized as a community. 

Definition \ref{def:mod-comm} leads naturally to an
efficient measure of a partitioning of $G$ into
modules. Indeed,
let $S_1,\ldots,S_k$ be a partition of $V$
into pairwise disjoint subsets. The {\em (normalized)
modularity of $S_1,\ldots,S_k$} is defined as
\begin{equation}\label{eq:q(partition)}
	q(S_1,\ldots,S_k) = \frac 1 {\vol V} 
	\sum_{i=1}^k Q(S_i) .
\end{equation}
The normalization factor $1/\vol V$ is somehow conventional. It has  been introduced in \cite{newman-eigenvectors,newman-girvan}
to settle the value of $q$ 
in a range independent on $G$ and $k$ and for compatibility with previous works.

The problem of partitioning a graph
into an arbitrary number of subrgaphs whose overall modularity
is maximized has received a considerable attention, 
not only in its applicative and computational aspects 
but also from the graph-theoretic point of view 
\cite{MR2942497,MR3089485}. 
The main contributions we propose in this work shall deal with the \textit{cut version} of the community detection problem,
that is 
the problem of finding a subset $S \subseteq V$ having maximal modularity (uniqueness   is not ensured in the general case).
To this end, it is worth to define the \textit{cut-modularity of the graph $G$} as the quantity
\begin{equation}\label{eq:modularity}
   \qcut = \max_{S\subseteq V} q(S,\bar S) 
    = \frac 2 {\vol V} \max_{S\subseteq V} Q(S) .
\end{equation}

It is well known that the optimization of the modularity 
function \eqref{eq:q(partition)}
presents some drawbacks when employed for finding a
partitioning of $G$ into modules, since small clusters tend to be subsumed by larger ones. Among the many techniques and 
variants of the Newman--Girvan modularity that have been devised to takle this issue,
which is widely known as {\em resolution limit},
here we borrow from \cite{Bolla2011} two weighted versions of the modularity function that play a relevant role in the subsequent discussion:
\begin{itemize}
	\item The {\em relative modularity} of $S\subseteq V$
	is $Q_{\mathrm{rel}}(S) = Q(S)/|S|$.
	This definition is naturally extended
	to the cut $\{S,\bar S\}$ as 
	\begin{equation}   \label{eq:q_rel}
	q_{\mathrm{rel}}(S,\bar S) = 
	Q_{\mathrm{rel}}(S) + Q_{\mathrm{rel}}(\bar S) = 
	Q(S) \frac{n}{|S||\bar S|} \, ,
	\end{equation}	
	which, in turn, leads to the definition of the \textit{relative cut-modularity of~$G$}
	$$\qcutrel = \max_{S\subseteq V}q_{\mathrm{rel}}(S,\bar S).$$
	\item The {\em normalized modularity} of $S\subseteq V$
	is defined as $Q_{\mathrm{norm}}(S) = Q(S)/\vol S$ and  that definition can be extended
	to the cut $\{S,\bar S\}$ as 
	\begin{equation}   \label{eq:q_norm}
	q_{\mathrm{norm}}(S,\bar S) = 
	Q_{\mathrm{norm}}(S) + Q_{\mathrm{norm}}(\bar S) = 
	Q(S) \frac{\vol V}{\vol S\vol\bar S} .
	\end{equation}
	As before we define the \textit{normalized cut-modularity of the graph $G$ as}
	$$\qcutnorm = \max_{S\subseteq V}q_{\mathrm{norm}}(S,\bar S).$$
\end{itemize}
Straightforward computations ensure
	\begin{equation*}
	\frac {2\, \qcutrel}{n\,  d_{\max}}  \leq \qcut \leq \frac \qcutrel 2,  \qquad 	\frac {2 \, \qcutnorm}{\vol V} \leq \qcut \leq \frac{\qcutnorm}{2} .
	\end{equation*}

\section{Modularity matrices and their properties}\label{sec:matrices}

The probably best known methods for detecting a subset whose modularity well approximates 
the cut-modularity of $G$ are based on the idea of spectral partitioning and are related with an important rank-one correction of the adjacency matrix,  known as 
\textit{the Newman--Girvan modularity matrix}. 
In analogy with the graph Laplacians, in this section we define two different modularity matrices, describing a number of relevant structural properties. 

\subsection{The Newman--Girvan modularity matrix}

Given a graph $G$ and the associated adjacency matrix $A$, let $d=A\uno$ be the degree vector of $G$, and $\vol V = \sum_i d_i$ be its volume. 
The \textit{unnormalized modularity matrix} of $G$ has been introduced in \cite{newman-eigenvectors} as 
the following rank one perturbation of $A$:
\begin{equation}   \label{eq:M}
   M = A - \frac{1}{\vol V} dd^\t .
\end{equation}
For any $S\subseteq V$ let $\uno_S$ be 
its characteristic vector: $(\uno_S)_i = 1$ if $i\in S$
and $(\uno_S)_i = 0$ otherwise.
With the help of these notations we can express $Q(S)$ as 
\begin{equation}   \label{eq:Q(S)}
   Q(S) = \uno_S^\t M \uno_S 
\end{equation}
  The following proposition summarizes some basics  properties of $M$:
 \begin{proposition}   \label{prop:M}
 	The matrix $M$ satisfies the following properties:
 	\begin{enumerate}
 		\item $M$ is symmetric and $\uno \in \ker(M)$.
 		\item If $m_1\geq \dots \geq m_n$ are the eigenvalues of $M$ and $\alpha_1\geq \dots \geq \alpha_n$ those of $A$, then $\alpha_1 \geq m_1 \geq \alpha_2 \geq m_2 \geq \dots \geq \alpha_n \geq m_n$.
 		\item $0$ is a simple eigenvalue of $M$ if and only if $A$ is nonsingular.
\item 
The rightmost eigenvalue of $M$ is 
nonnegative, and is zero if 
and only if $G$ is a complete multipartite graph.
 	\end{enumerate}
 \end{proposition}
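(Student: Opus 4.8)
The plan is to handle the four items in increasing order of difficulty, item (4) being the only one that demands genuine work. Items (1) and (2) are immediate: $A$ and $dd^\t$ are symmetric so $M$ is, and $M\uno = A\uno - \tfrac1{\vol V}d(d^\t\uno) = d - d = 0$ because $d^\t\uno = \vol V$; for the interlacing claim I would simply note that $M$ is $A$ minus the rank-one positive semidefinite matrix $\tfrac1{\vol V}dd^\t$, so the asserted chain is Weyl's interlacing theorem for rank-one perturbations (equivalently, Courant--Fischer applied over the codimension-one subspace $\{x : d^\t x = 0\}$, on which $M$ and $A$ agree, gives it directly).

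For item (3) I would compute $\ker M$ exactly. Since $A$ is symmetric, every $y \in \ker A$ has $d^\t y = \uno^\t A y = 0$, hence $My = Ay - \tfrac{d^\t y}{\vol V}d = 0$; thus $\ker A \subseteq \ker M$, while $\uno \in \ker M \setminus \ker A$ because $A\uno = d \neq 0$ in a connected graph. Conversely $Mx = 0$ forces $A\bigl(x - \tfrac{d^\t x}{\vol V}\uno\bigr) = 0$, i.e.\ $x \in \ker A + \mathrm{span}\{\uno\}$, a sum which is direct; hence $\dim\ker M = \dim\ker A + 1$, and $0$ is a simple eigenvalue of $M$ exactly when $A$ is nonsingular.

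Item (4) is where the work lies. Nonnegativity of the rightmost eigenvalue $m_1$ is free from (1), since $0$ is then an eigenvalue of $M$. For the equivalence $m_1 = 0 \iff M \preceq 0 \iff G$ complete multipartite I would treat the two implications separately. If $G$ has parts $V_1,\dots,V_k$ with $n_i = |V_i|$, then $A = \uno\uno^\t - \sum_i \uno_{V_i}\uno_{V_i}^\t$ and $d = \sum_i (n - n_i)\uno_{V_i}$, so writing $\sigma_i = \uno_{V_i}^\t x$ one gets $x^\t M x = \bigl(\sum_i \sigma_i\bigr)^2 - \sum_i \sigma_i^2 - \tfrac1{\vol V}\bigl(\sum_i (n - n_i)\sigma_i\bigr)^2$; this depends on $x$ only through $\sigma \in \mathbb R^k$, which may be arbitrary, so $M \preceq 0$ iff the fixed $k \times k$ matrix $N = I + \tfrac1{\vol V}\mu\mu^\t - \uno\uno^\t$ is positive semidefinite, where $\mu = (n - n_1,\dots,n - n_k)^\t$. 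As $I + \tfrac1{\vol V}\mu\mu^\t$ is positive definite, $N \succeq 0$ iff $\uno^\t\bigl(I + \tfrac1{\vol V}\mu\mu^\t\bigr)^{-1}\uno \le 1$, and Sherman--Morrison rewrites the left side as $k - \tfrac{(\uno^\t\mu)^2}{\vol V + \|\mu\|^2}$; since $\uno^\t\mu = n(k-1)$ and $\vol V + \|\mu\|^2 = n\sum_i(n-n_i) = n^2(k-1)$, this equals $k - (k-1) = 1$, giving $N \succeq 0$ and $m_1 = 0$. For the converse I would use contraposition: a connected graph that is not complete multipartite has non-transitive non-adjacency, so there exist distinct $x, y, z$ with $x \sim z$, $x \not\sim y$, $y \not\sim z$; taking $v = e_x + e_z - \tfrac{d_x + d_z}{d_y} e_y$ (valid since $d_y \ge 1$), with $e_x$ the $x$-th standard basis vector, gives $d^\t v = 0$ and $v^\t A v = 2$, so $v^\t M v = 2 > 0$ and $m_1 > 0$.

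The main obstacle is the ``if'' half of item (4): one has to spot that on a complete multipartite graph the quadratic form $x \mapsto x^\t M x$ factors through the $k$ part-sums $\uno_{V_i}^\t x$, reducing positive semidefiniteness of an $n \times n$ matrix to that of a fixed $k \times k$ one, and then push the Sherman--Morrison computation through --- the telescoping identity $\vol V + \|\mu\|^2 = n^2(k-1)$ being exactly what makes the bound tight. The ``only if'' half instead relies on the standard characterization of complete multipartite graphs (their complements are disjoint unions of cliques), which I would invoke rather than reprove.
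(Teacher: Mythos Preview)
Your argument is correct in all four items. Items (1) and (2) match the paper exactly. For item (3) the paper instead passes to the normalized matrices $\mathcal A=D^{-1/2}AD^{-1/2}$ and $\mathcal M=D^{-1/2}MD^{-1/2}$, observes that $\mathcal M$ is $\mathcal A$ with the Perron projector $\delta\delta^\t/\vol V$ removed (so the simple eigenvalue $1$ of $\mathcal A$ becomes a $0$ of $\mathcal M$ and all other eigenvalues are unchanged), and then invokes invariance of the multiplicity of $0$ under congruence; your direct computation of $\ker M=\ker A\oplus\langle\uno\rangle$ is a bit more elementary and avoids the detour through normalization.

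The substantive difference is item (4): the paper does not prove it at all but simply cites the literature (Theorem~1.1 of Majstorovi\'c--Stevanovi\'c and Theorem~11 of \cite{MR3338340}). Your self-contained argument is genuinely different content. The ``if'' direction, reducing $x^\t Mx$ on a complete multipartite graph to a quadratic form in the part-sums $\sigma_i$ and then checking positive semidefiniteness of the fixed $k\times k$ matrix $N=I+\tfrac{1}{\vol V}\mu\mu^\t-\uno\uno^\t$ via Sherman--Morrison, is neat; the identity $\vol V+\|\mu\|^2=n^2(k-1)$ is exactly what makes $\uno^\t(I+\tfrac{1}{\vol V}\mu\mu^\t)^{-1}\uno=1$, so $N$ is PSD with equality --- consistent with the fact that $m_1=0$ actually attains the bound. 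The ``only if'' direction via the standard characterisation of complete multipartite graphs (non-adjacency is an equivalence relation) and the explicit test vector $v=e_x+e_z-\tfrac{d_x+d_z}{d_y}e_y$ is also clean; note that connectedness is not even needed there. What your approach buys is a fully self-contained treatment; what the paper's approach buys is brevity and a pointer to where the result sits in the broader literature on modularity and adjacency spectra.
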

 \begin{proof}
Point 1 is
revealed by a direct computation. Point 2 is a direct consequence of the variational characterization of the eigenvelaues of symmetric matrices, see e.g., \cite{wilkinson}. To show point \emph 3 we observe that the multipliticy of the zero eigenvalue of $M$ is one plus the dimension of the kernel of $A$. Indeed consider the diagonal matrix $\mathit \Delta = \mathrm{Diag}(1/\sqrt{d_1}, \dots, 1/\sqrt{d_n})$ and let $\delta = \mathit \Delta d$. Then $\mathit \Delta M \mathit \Delta \delta = 0$ and   $\mathit \Delta A \mathit \Delta \delta = \delta$. Therefore the multiplicity of the zero eigenvalue of $\mathit \Delta M \mathit \Delta $ is the multiplicity of the zero eigenvalue of $\mathit \Delta A \mathit \Delta$ plus one. This proves point \emph 3 as the multiplicity of $0$ is invariant under matrix congruences.
Point \emph 4 is a rephrasing of 
Theorem 1.1 in \cite{Majstorovic2014}. See also 
\cite[Thm.\ 11]{MR3338340}.
\end{proof}

The modularity matrix $M$ is at the  basis of many  spectral methods for community detection, and the eigenstructure of $M$ can be used to describe  clustering properties of graphs. In particular,  the nodal domains associated to its principal eigenvectors  cover a special role, as they are often good candidates for leading modules inside $G$.  A number of results relating algebraic properties of $M$ to communities in $G$ have appeared in recent literature \cite{Bolla2011,MR3338340,FT14, FT15,Majstorovic2014}, the  forthcoming Theorem \ref{thm:advanced-M} summarizes those among them which to our opinion are most relevant.

 As it often plays a special role in the algebraic analysis of the modular structure of $G$, the rightmost nonzero eigenvalue of $M$ deserves a the special symbol, borrowed from \cite{FT14} and therein named \textit{algebraic modularity}:
\begin{equation}   \label{eq:m_G}
   m_G = \max_{\substack{v \in \mathbbm R^n \\ v^\t \uno =0}}
   \frac{v^\t M v}{v^\t v}\, .
\end{equation} 
Already at this stage intuition suggests that a close relation should exists between $m_G$ and the cut-modularity 
\eqref{eq:modularity}, and that the subsets $S\subseteq V$  having positive modularity should be related with  positive eigenvalues of $M$. 
The following theorem  summarizes some important eigenproperties of $M$ that have been proven in recent literature,
see in particular, 
\cite{MR3338340, FT14, Majstorovic2014}.

\begin{theorem} \label{thm:advanced-M} The  matrix $M$ satisfies the following properties:
\begin{enumerate}

\item $m_G<\rho(A)$ and, 
if $d$ is not an eigenvector of $A$, then $m_G$ is simple.

\item 
If $G$ is not a complete graph
or a complete multipartite graph then $m_G = \lambda_1(M)$,
the rightmost eigenvalue of $M$, and is positive. 
If $G$ is a star then $m_G = \lambda_2(M)$, the second rightmost eigenvalue of $M$, and is negative.
Otherwise (that is, if $G$ is a complete graph
or a complete multipartite graph which is not a star)
$m_G = 0$.

\item Let $\left<d\right>=\vol V/n$ be the average degree of $G$, then $m_G \geq 2 \left<d\right> \qcut$.

\item Let $\{S_1, \dots, S_k\}$ be a partition that maximizes the quantity in \eqref{eq:q(partition)}, which has minimal cardinality, and which is made up entirely by modules. Then $k-1$ does not exceed the number of positive eigenvalues of $M$.

\item Let $u$ be an eigenvector associated with $m_G$ such that $d^\t u \geq 0$. If $m_G$ is simple and it is not an eigenvalue of $A$ then the subgraph induced by the subset $S_+=\{i \mid u_i \geq  0\}$ is connected.
		
\end{enumerate}
\end{theorem}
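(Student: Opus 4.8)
The plan is to handle the five assertions one at a time, using Proposition~\ref{prop:M} freely and citing the literature only for the simplicity clause in item~1 and the exceptional-graph cases in item~2.

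\emph{Items 1 and 2.} For $m_G<\rho(A)$ I would combine $v^\t M v = v^\t A v-(d^\t v)^2/\vol V\le v^\t A v$ with the Perron--Frobenius theorem: as $A$ is nonnegative and irreducible, equality in $v^\t A v\le\rho(A)\,v^\t v$ forces $v$ to be a Perron eigenvector of $A$, which is strictly positive and hence not orthogonal to $\uno$; so $v^\t A v<\rho(A)\,v^\t v$ for every nonzero $v\perp\uno$, and since the maximum in \eqref{eq:m_G} is attained on the compact unit sphere of $\uno^\perp$ it is strictly below $\rho(A)$. For the simplicity clause I would remark that if $m_G$ were a repeated eigenvalue of $M$ then, combining two independent eigenvectors so as to annihilate the functional $w\mapsto d^\t w$, one produces a nonzero $w$ with $d^\t w=0$ and $(A-m_G I)w=0$, whence $m_G\in\mathrm{spec}(A)$; turning this into the exact statement phrased in terms of ``$d$ not being an eigenvector of $A$'' needs a little more, and I would defer it to \cite{FT14}. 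For item~2, Proposition~\ref{prop:M}(4) says the rightmost eigenvalue $\lambda_1(M)$ is $\ge 0$, with equality precisely for complete multipartite graphs; hence if $G$ is neither complete nor complete multipartite then $\lambda_1(M)>0$, and since $\uno\in\ker M$ only contributes the eigenvalue $0<\lambda_1(M)$, the maximum over $\uno^\perp$ still equals $\lambda_1(M)$, i.e.\ $m_G=\lambda_1(M)>0$. The values of $m_G$ for stars and for complete multipartite graphs I would read off their explicit spectra as in \cite{Majstorovic2014,MR3338340}.

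\emph{Items 3 and 4.} For item~3, take $S_0$ with $Q(S_0)=\max_S Q(S)$ and $0<|S_0|<n$, and evaluate the Rayleigh quotient in \eqref{eq:m_G} at $v=\uno_{S_0}-\tfrac{|S_0|}{n}\uno\perp\uno$: using $M\uno=0$ one gets $v^\t M v=Q(S_0)$ and $\|v\|^2=|S_0|\,|\bar S_0|/n$, hence $m_G\ge nQ(S_0)/(|S_0||\bar S_0|)\ge 4Q(S_0)/n=2\langle d\rangle\,\qcut$, using $|S_0||\bar S_0|\le n^2/4$ and \eqref{eq:modularity}. For item~4, let $X$ be the $n\times k$ matrix with columns $\uno_{S_1},\dots,\uno_{S_k}$ and set $B=X^\t M X$, a symmetric matrix with $B\uno=0$ since $\sum_m\uno_{S_m}=\uno\in\ker M$. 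Merging two parts $S_\ell,S_m$ shifts the value \eqref{eq:q(partition)} by exactly $2B_{\ell m}/\vol V$ and keeps every part a module when $B_{\ell m}\ge 0$; thus maximality of $q$ forbids $B_{\ell m}>0$ and minimality of $k$ forbids $B_{\ell m}=0$, so $B_{\ell m}<0$ whenever $\ell\ne m$. Therefore $B$ is exactly the Laplacian of a complete graph on $k$ vertices with the positive weights $-B_{\ell m}$, hence positive semidefinite with one-dimensional kernel, so it has $k-1$ positive eigenvalues; since passing from $M$ to $X^\t M X$ with $X$ of full column rank cannot increase the number of positive eigenvalues, $k-1$ does not exceed the number of positive eigenvalues of $M$.

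\emph{Item 5.} This nodal-domain statement is, together with the simplicity clause of item~1, the part I expect to demand most care. By item~2 we may assume $m_G=\lambda_1(M)>0$, since a complete multipartite $G$ would have $m_G\le 0$ with $m_G\in\mathrm{spec}(A)$, against the hypothesis. Suppose $G(S_+)$ is disconnected: pick a connected component $S_1$ of $G(S_+)$, let $S_2=S_+\setminus S_1$, so $e(S_1,S_2)=0$ and $S_1,S_2\ne\emptyset$, and set $v=u|_{S_+}\ge 0$, which is nonzero (else $u\le 0$, which with $d^\t u\ge 0$ gives $u=0$). Restricting $Mu=m_G u$ to a vertex $i\in S_+$ gives, with $B$ the principal submatrix of $A$ on $S_+$, $(Bv)_i=m_G v_i+\tfrac{d_i}{\vol V}d^\t u-\sum_{j\notin S_+}a_{ij}u_j\ge m_G v_i$, since $d^\t u\ge 0$ and $u_j<0$ for $j\notin S_+$; and $B=B_1\oplus B_2$ because $e(S_1,S_2)=0$. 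If one of the blocks $v_1,v_2$ were zero, the same identity would force $d^\t u=0$ and then that no edge leaves the corresponding set, contradicting connectedness of $G$; so $v_1,v_2\ne 0$, and a Perron--Frobenius / Collatz--Wielandt argument gives $\rho(B_1),\rho(B_2)\ge m_G$, whence the two largest eigenvalues of $B_1\oplus B_2$ are $\ge m_G$. Since $V\setminus S_+\ne\emptyset$, $B_1\oplus B_2$ is a proper principal submatrix of $A$, so Cauchy interlacing gives $\alpha_2\ge m_G$; but Proposition~\ref{prop:M}(2) gives $m_G=\lambda_1(M)\ge\alpha_2$, forcing $m_G=\alpha_2\in\mathrm{spec}(A)$ and contradicting the hypothesis. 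The genuinely delicate points are the bookkeeping around vanishing entries of $u$ (which is why it helps that $S_+$ absorbs all of them) and verifying that each interlacing step is legitimate; for the complete argument I would follow \cite{FT14,FT15}.
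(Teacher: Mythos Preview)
The paper does not prove Theorem~\ref{thm:advanced-M} at all: it is stated as a summary of results ``that have been proven in recent literature'' with citations to \cite{MR3338340,FT14,Majstorovic2014}, and no argument is given. So there is nothing in the paper to compare your proof against line by line; you have supplied substantially more than the paper itself does.

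That said, your arguments are essentially correct and track the proofs in the cited sources. A few remarks. Your treatment of item~3 via the test vector $\uno_{S_0}-\tfrac{|S_0|}{n}\uno$ and the bound $|S_0||\bar S_0|\le n^2/4$ is exactly the standard argument. In item~4 your proof is clean; the step ``passing from $M$ to $X^\t M X$ with $X$ of full column rank cannot increase the number of positive eigenvalues'' is a Sylvester-type inertia bound and is correct, but it is worth saying explicitly that this follows because any subspace on which $X^\t M X$ is positive definite is carried injectively by $X$ to a subspace on which $M$ is positive definite. In item~5 your Perron--Frobenius step ``$B_jv_j\ge m_Gv_j$, $v_j\ge 0$, $v_j\neq 0$ implies $\rho(B_j)\ge m_G$'' is right because $B_j$ is symmetric nonnegative, so $\rho(B_j)=\lambda_{\max}(B_j)\ge v_j^\t B_jv_j/v_j^\t v_j\ge m_G$; the Collatz--Wielandt formulation you mention would require more care when $v_j$ has zero entries. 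Your reduction to the case $m_G=\lambda_1(M)>0$ via ``a complete multipartite $G$ would have $m_G\in\mathrm{spec}(A)$'' is the right idea but a little glib; it is cleaner to argue directly from the interlacing $m_1\ge\alpha_2\ge m_2$ and the hypothesis $m_G\notin\mathrm{spec}(A)$, and your deferral to \cite{FT14,FT15} for the full details is appropriate.
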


For any $S\subseteq V$ let $v_S = \uno_S - \frac{|S|}{n}\uno$.
The following identities are readily obtained:
$$
   v_S^\t \uno = 0 , \qquad 
   v_S^\t v_S =  \frac{|S||\bar S|}{n} , \qquad
   v_S^\t M v_S = Q(S) , \qquad
   q_{\mathrm{rel}}(S,\bar S) = \frac{v_S^\t M v_S}{v_S^\t v_S}.
$$
Hence, the combinatorial problem of finding the cut $\{S,\bar S\}$
with largest relative modularity has a natural continuous relaxation in the maximization of the Rayleigh quotient
$v^\t Mv/v^\t v$ over the subspace orthogonal to $\uno$,
that is, the algebraic modularity defined in \eqref{eq:m_G}.
We have the immediate consequence
$$
   \qcutrel \leq m_G .
$$


\subsection{The normalized modularity matrix}

In analogy with the renowed normalized Laplacian matrix of a graph, we let $\A = D^{-1/2}AD^{-1/2}$ be the normalized adjacency matrix and define the \textit{normalized modularity matrix} of $G$ as 
$$
   \M = D^{-1/2}M D^{-1/2} =
   \A - \frac{1}{\vol V}\delta \delta^\t
$$
where $\delta = (\sqrt{d_1},\ldots,\sqrt{d_n})^\t$
and $M$ is as in \eqref{eq:M}.
The matrix $\M$
appeared recently in the community detection literature, and in  
various other network related questions as the analysis of quasi-randomness properties of graphs with given degree sequences,
see \cite{Bolla2011,MR2371048,FT15} and \cite[Chap.\ 5]{chung}.
Several basics properties of $\M$ can be immediately observed; we collect some of them hereafter. 

\begin{proposition} \label{oss:M}
	The matrix $\mathcal M$ satisfies the following properties:
\begin{enumerate}
\item  $\M$ has a zero eigenvalue with corresponding eigenvector $\delta$. 
 
\item The matrices $\M$ and $\A$ 
coincide over the space orthogonal to $\delta$.
That is, $\M v = \A v$ for all $v\in\langle\delta\rangle^\perp$.

\item The eigenvalues of $\M$ belong to the interval $[-1, 1]$.
Moreover, $0$ is a simple eigenvalue of $\M$ if and only if 
$\A$ is nonsingular.

\item 
If $G$ is connected then $1$ is not an eigevalue of $\M$.
Furthermore, if $G$ is not bipartite then $-1$ is not an eigevalue of $\M$.

\end{enumerate}
\end{proposition}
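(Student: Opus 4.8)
The plan is to reduce every assertion to standard spectral facts about the normalized adjacency matrix $\A$, exploiting the orthogonal splitting $\mathbbm R^n=\langle\delta\rangle\oplus\langle\delta\rangle^\perp$.

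First I would record the two identities $\A\delta=D^{-1/2}AD^{-1/2}\delta=D^{-1/2}A\uno=D^{-1/2}d=\delta$ and $\delta^\t\delta=\sum_i d_i=\vol V$. Point~1 then follows at once from $\M\delta=\A\delta-(\vol V)^{-1}\delta(\delta^\t\delta)=\delta-\delta=0$, and point~2 is immediate because $\delta^\t v=0$ for $v\in\langle\delta\rangle^\perp$ forces $\M v=\A v-(\vol V)^{-1}\delta(\delta^\t v)=\A v$.

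For point~3 I would observe that, $\A$ being symmetric with $\A\delta=\delta$ and $\M$ being symmetric with $\M\delta=0$, the subspace $\langle\delta\rangle^\perp$ is invariant under both operators, on which they coincide by point~2. Counting multiplicities, this yields $\mathrm{spec}(\M)=\{0\}\cup\mathrm{spec}(\A|_{\langle\delta\rangle^\perp})$ and $\mathrm{spec}(\A)=\{1\}\cup\mathrm{spec}(\A|_{\langle\delta\rangle^\perp})$, the eigenvalue $1$ of the irreducible nonnegative matrix $\A$ being simple with positive eigenvector $\delta$ by Perron--Frobenius; in particular $\rho(\A)=1$, so $\mathrm{spec}(\A)\subseteq[-1,1]$ and hence $\mathrm{spec}(\M)\subseteq[-1,1]$. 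Moreover the multiplicity of $0$ in $\mathrm{spec}(\M)$ equals $1$ plus its multiplicity in $\mathrm{spec}(\A|_{\langle\delta\rangle^\perp})$, which (as $0\neq1$) equals $1$ plus the multiplicity of $0$ in $\mathrm{spec}(\A)$; thus $0$ is simple for $\M$ exactly when $\A$ --- equivalently $A$, since $D$ is invertible --- is nonsingular. Alternatively, this last part follows from Proposition~\ref{prop:M}(3) since $\M=D^{-1/2}MD^{-1/2}$ is congruent to $M$ and congruence preserves the inertia.

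Point~4 uses the same decomposition together with two classical spectral characterizations for $\A$. Since $1$ is a \emph{simple} eigenvalue of $\A$ with eigenvector $\delta$ when $G$ is connected, $1\notin\mathrm{spec}(\A|_{\langle\delta\rangle^\perp})$, and $1\neq0$, so $1\notin\mathrm{spec}(\M)$; and since for a connected graph $-1\in\mathrm{spec}(\A)$ if and only if $G$ is bipartite, when $G$ is non-bipartite we have $-1\notin\mathrm{spec}(\A)\supseteq\mathrm{spec}(\A|_{\langle\delta\rangle^\perp})$ and $-1\neq0$, so $-1\notin\mathrm{spec}(\M)$. There is no genuine obstacle here: the only points requiring care are the bookkeeping of eigenvalue multiplicities when transferring spectral information from $\A$ to $\M$ across the invariant subspace $\langle\delta\rangle^\perp$, and the appeal to the standard Perron--Frobenius and bipartiteness characterizations of $\mathrm{spec}(\A)$, which may be quoted from \cite{chung}.
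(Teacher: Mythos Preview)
Your proof is correct and follows essentially the same route as the paper's: both verify points~1 and~2 by direct computation, then exploit the orthogonal splitting $\mathbbm R^n=\langle\delta\rangle\oplus\langle\delta\rangle^\perp$ (the paper phrases this via an explicit spectral decomposition $\A=\sum_i\lambda_iq_iq_i^\t$ with $q_1\parallel\delta$, you via invariant subspaces) together with Perron--Frobenius to obtain the eigenvalue bounds and the simplicity criterion, and finally invoke irreducibility and primitivity (non-bipartiteness) of $\A$ for point~4. Your alternative derivation of the ``$0$ simple iff $\A$ nonsingular'' claim via congruence with $M$ and Proposition~\ref{prop:M}(3) is a nice bonus that the paper does not mention.
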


\begin{proof}
Straightforward computations  show that $\A \delta = \delta$
and $\M\delta =0$. Since $\A \geq O$ and $\delta \geq 0$, 
Perron--Frobenius theory leads us to deduce that $\rho(\A)=1$
is an eigenvalue of $\A$. Therefore,
if $\A = \sum_{i=1}^n \lambda_i q_iq_i^\t$ is a spectral 
decomposition of $\A$ with the eigenvalues in nonincreasing order,
$\lambda_1 \geq \ldots \geq \lambda_n$, then we can assume
$\lambda_1 = 1$, $|\lambda_i| \leq 1$ for $i > 1$, 
and $q_1$ parallel to $\delta$.
In particular,
$\delta \delta^\t/\vol V$ is the orthogonal projector on the eigenspace 
spanned by $q_1$, since $\delta^\t \delta =\vol V$. Consequently, 
$\M = \sum_{i=2}^n \lambda_i q_iq_i^\t$ is a spectral 
decomposition of $\M$ and we easily deduce points \emph 2 and \emph 3.
Incidentally, this proves that $\M$ and $\A$ are simultaneously diagonalizable.
If $G$ is connected then $\A$ is irreducible and
$\lambda_1$ is simple, that is $1 > \lambda_2$.
Furthermore, if $G$ is not bipartite then
$\A$ is also primitive and $|\lambda_i| < 1$ for $i > 1$, 
and the proof is complete.
\end{proof}

\medskip
The normalized modularity 
\eqref{eq:q_norm} of a cut $\{S,\bar S\}$
can be naturally defined in terms of $\M$. 
In fact, given any $S \subseteq V$, consider the vector 
\begin{equation}   \label{eq:v_S}
   v_S = D^{1/2}(\uno_S - c \uno),
   \qquad c = \vol S/\vol V .
\end{equation}
Simple computations prove that 
$$
   \delta^Tv_S = 0 , \qquad 
   v_S^\t v_S = \frac{\vol S \, \vol \bar S}{\vol V} . 
$$
Moreover,
$$
   \frac{v_S^\t \M v_S}{v_S^\t v_S} 
   =  \frac{(\uno_S - c\uno)^\t M 
   (\uno_S - c\uno)}{v_S^\t v_S}  
   =  \frac{\uno_S^\t M \uno_S}
   {\vol S \,\vol \bar S}  \vol V 
   = q_{\mathrm{norm}}(S,\bar S) .
$$
It follows that the problem of 
computing the normalized  cut-modularity of $G$
 can be stated in terms of $\M$. Indeed, if $\mathcal V_n$ is the set of $n$-vectors having the form \eqref{eq:v_S} for some 
$S\subset V$,
then $v_S$ is a generic vector in $\mathcal V_n$, implying that
\begin{equation}\label{eq:q_G}
   \qcutnorm =  \max_{v \in \mathcal V_n}
   \frac{v^\t \M v}{v^\t v} 
\end{equation}
and of course, if $\hat v$ is the vector realizing the maximum in \eqref{eq:q_G}, then the set $\hat S = \{i \mid \hat v_i > 0\}$ defines the optimal cut. 
As for the unnormalized case, it is worth defining the  \textit{normalized algebraic modularity}:
\begin{equation}\label{eq:mu(G)}
   \mu_G = 
   \max_{\substack{v \in \mathbbm R^n \\ v^\t \delta=0}}
   \frac{v^\t \M v}{v^\t v} .
\end{equation}
Note that  \eqref{eq:mu(G)} is a relaxed version of \eqref{eq:q_G}. In particular,
\begin{equation}   \label{eq:ineq_q_norm}
   \qcutnorm \leq \mu_G .
\end{equation}
Since $\M$ is real symmetric we immediately note that $\mu_G$ coincides with the largest eigenvalue of $\M$ after deflation of the invariant subspace spanned by $\delta$. 
Therefore, if $-1\leq \mu_n \leq \cdots \leq \mu_1 \leq 1$ are the eigenvalues of $\M$, then $\mu_1 = \max\{0, \mu_G\}$.
Furthermore, since $M$ and $\M$ are related by a congruence
transform, point \emph 2 of Theorem \ref{thm:advanced-M} 
leads us to the following result: 
\begin{corollary}   \label{cor:mu}
If $G$ is not a star then $\mu_G = \mu_1$, the rightmost eigenvalue of $\M$. Moreover, 
$\mu_G > 0$ if and only if 
$G$ is not a complete graph or a complete multipartite graph.
\end{corollary}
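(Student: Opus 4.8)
The plan is to transfer to $\mu_G$ the corresponding facts about the unnormalized algebraic modularity $m_G$ recorded in Theorem~\ref{thm:advanced-M}(2), using only that $M$ and $\M$ are congruent and the identity $\mu_1=\max\{0,\mu_G\}$ noted just above. That identity forces $\mu_G=\mu_1$ precisely when $\mu_G\ge0$, and the ``moreover'' clause is again a statement about the sign of $\mu_G$; so it suffices to establish the sign correspondence $m_G>0\iff\mu_G>0$ and $m_G\ge0\iff\mu_G\ge0$, after which both assertions follow by reading off the three cases of Theorem~\ref{thm:advanced-M}(2).

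To prove the sign correspondence I would argue through inertia. Let $(n_+,n_0,n_-)$ be the numbers of positive, zero and negative eigenvalues of $M$. Since $\M=D^{-1/2}MD^{-1/2}$ with $D^{-1/2}$ invertible, Sylvester's law of inertia gives $\M$ the same inertia $(n_+,n_0,n_-)$. Now $\uno\in\ker M$ by Proposition~\ref{prop:M}(1) and $\delta=D^{1/2}\uno\in\ker\M$ by Proposition~\ref{oss:M}(1); since $M$ and $\M$ are symmetric, the compressions $M|_{\uno^\perp}$ and $\M|_{\delta^\perp}$ are genuine restrictions, and each is obtained from its parent matrix by deleting exactly one zero eigenvalue, so both have inertia $(n_+,n_0-1,n_-)$. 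By the variational formulas \eqref{eq:m_G} and \eqref{eq:mu(G)}, $m_G$ is the largest eigenvalue of $M|_{\uno^\perp}$ and $\mu_G$ the largest eigenvalue of $\M|_{\delta^\perp}$; hence $m_G>0\iff n_+\ge1\iff\mu_G>0$, and likewise $m_G\ge0$ holds if and only if $n_+\ge1$ or $n_0\ge2$, which is exactly the condition characterising $\mu_G\ge0$, so $m_G\ge0\iff\mu_G\ge0$ as well.

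It then remains to invoke Theorem~\ref{thm:advanced-M}(2), which says $m_G$ is positive when $G$ is neither a complete graph nor a complete multipartite graph, is negative when $G$ is a star, and is zero otherwise; equivalently, $m_G\ge0$ exactly when $G$ is not a star, and $m_G>0$ exactly when $G$ is neither a complete graph nor a complete multipartite graph. Combining this with the sign correspondence: if $G$ is not a star then $\mu_G\ge0$, so $\mu_G=\max\{0,\mu_G\}=\mu_1$; and $\mu_G>0$ if and only if $G$ is neither a complete graph nor a complete multipartite graph. This is precisely the statement of the corollary.

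The step that needs the most care is the inertia bookkeeping in the middle paragraph: the congruence matrix $D^{\pm1/2}$ does \emph{not} map $\uno^\perp$ onto $\delta^\perp$ in a way that respects the two quadratic forms, so one cannot simply congruence the deflated operators to one another. The equality of the inertias of $M|_{\uno^\perp}$ and $\M|_{\delta^\perp}$ must instead be routed through the \emph{global} inertia of $M$ and $\M$, together with the observation that on each side one removes precisely one unit of nullity carried by the explicit kernel vector ($\uno$ for $M$, $\delta$ for $\M$).
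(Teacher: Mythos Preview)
Your argument is correct and follows essentially the same route as the paper, which simply states that the corollary follows from Theorem~\ref{thm:advanced-M}(2) ``since $M$ and $\M$ are related by a congruence transform.'' You have merely filled in the inertia bookkeeping that the paper leaves implicit, including the useful remark that one must compare $M|_{\uno^\perp}$ and $\M|_{\delta^\perp}$ via the global inertia of $M$ and $\M$ rather than by a direct congruence of the deflated operators.
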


\section{Cheeger-type inequalities}\label{sec:main} 

As we already discussed above, both heuristics and intuition suggest that $\mu_G$ quantifies the cut-modularity of the graph, and can be used to approximate $\qcutnorm$. While the upper bound $\qcutnorm\leq \mu_G$
has been shown in \eqref{eq:ineq_q_norm} by simple arguments, a converse relation, bounding $\qcutnorm$ from below in terms of $\mu_G$, is not that easy. 
In fact, there it is possible that $\mu_G > 0$ while $\qcutnorm < 0$,
as shown experimentally in \cite{MR3338340}.
Theorems \ref{teo:simple-cheeger} and \ref{teo:cheeger} 
contribute to this question stating  lower (and upper) bounds of $\qcutnorm$ in terms of spectral properties of of $\M$. 

The  {conductance} (or {sparsity}, or Cheeger constant) 
$h_G$ is one of  the best known topological invariants of a graph $G$. 
For $S\subset V$ let
$$
   h(S)= \frac{e_{\mathrm{out}}(S)}{\min\{\vol S, \vol \bar S\}}\, ,
$$
the so-called conductance of $S$. Then,
the conductance of $G$ is defined as $h_G = \min_{S\subset V}h(S)$. 
Such quantity plays a fundamental role in graph partitioning problems \cite[Chap.\ 11]{newman-book}, 
in isoperimetric problems \cite[Chap.\ 2]{chung}, 
mixing properties of random walks, combinatorics,
and in various other areas of mathematics and computer science. 
A renowned result in  graph theory, known as \textit{Cheeger inequality},
relates the conductance of $G$ and the smallest positive eigenvalue of 
the normalized Laplacian matrix $\L = I - \A$. 

If $0= \lambda_1 < \lambda_2 \leq \dots \leq \lambda_n \leq 2$ are the eigenvalues of  $\L$,
the Cheeger inequality states that  
$$
   \textstyle{\frac {1}2\lambda_2\leq 
   h_G \leq \sqrt{2\lambda_2}} .
$$ 
Actually, Chung \cite{chung} improved the upper bound to
$h_G \leq \sqrt{\lambda_2(2-\lambda_2)}$.
Let $v$ be an eigenvector of $\L$ corresponding to $\lambda_2$ and
consider the equality 
$\L = I - \A = I - \M + \delta\delta^\t/\delta^\t \delta$.
Since $\L \delta = 0$, we have $\delta^\t v = 0$.
By Courant's minimax principle and \eqref{eq:mu(G)},
$$
   \lambda_2 = \min_{v:\delta^\t v = 0}\frac{v^\t\L v}{v^\t v}
   = 1 - \max_{v:\delta^\t v = 0}\frac{v^\t\M v}{v^\t v}
   = 1 - \mu_G .
$$
In particular, from Corollary \ref{cor:mu}
we obtain that, if $G$ is not a star then $1 - \lambda_2$
is the rightmost eigenvalue of $M$. 
A direct application of the Cheeger inequality yields
the following estimates for $\qcutnorm$.

\begin{theorem} \label{teo:simple-cheeger} 
Let $\mu_1$ be the rightmost eigenvalue of $\M$.
If $G$ is not a star then 
$$
   1-2\sqrt{1-\mu_1^2}\leq \qcutnorm \leq \mu_1 .
$$
\end{theorem}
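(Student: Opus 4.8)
The plan is to derive the theorem as a direct translation of the classical Cheeger inequality via the identity $\lambda_2 = 1 - \mu_G$ established just above the statement. Since $G$ is not a star, Corollary~\ref{cor:mu} gives $\mu_G = \mu_1$, so $\lambda_2 = 1 - \mu_1$. The upper bound $\qcutnorm \leq \mu_1$ is immediate: we already know $\qcutnorm \leq \mu_G$ from \eqref{eq:ineq_q_norm}, and $\mu_G = \mu_1$ in this case. So the real content is the lower bound $1 - 2\sqrt{1-\mu_1^2} \leq \qcutnorm$.

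For the lower bound, I would start from Chung's sharpened Cheeger inequality $h_G \leq \sqrt{\lambda_2(2-\lambda_2)}$. Substituting $\lambda_2 = 1-\mu_1$ gives $h_G \leq \sqrt{(1-\mu_1)(1+\mu_1)} = \sqrt{1-\mu_1^2}$. Let $S$ be a set achieving (or nearly achieving) the conductance, with $\vol S \leq \vol \bar S$, so that $h(S) = e_{\mathrm{out}}(S)/\vol S \leq \sqrt{1-\mu_1^2}$. The next step is to convert a conductance bound on $S$ into a normalized-modularity bound on the same cut. Using the second formula in \eqref{eq:def_Q(S)}, namely $Q(S) = \vol S\,\vol\bar S/\vol V - e_{\mathrm{out}}(S)$, we get
$$
q_{\mathrm{norm}}(S,\bar S) = \frac{Q(S)\,\vol V}{\vol S \, \vol \bar S} = 1 - \frac{e_{\mathrm{out}}(S)\,\vol V}{\vol S\,\vol\bar S} = 1 - h(S)\cdot\frac{\vol V}{\vol\bar S}.
$$
Since $\vol S \leq \vol\bar S$ we have $\vol\bar S \geq \tfrac12\vol V$, hence $\vol V/\vol\bar S \leq 2$, giving $q_{\mathrm{norm}}(S,\bar S) \geq 1 - 2h(S) \geq 1 - 2\sqrt{1-\mu_1^2}$. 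Because $\qcutnorm = \max_{S} q_{\mathrm{norm}}(S,\bar S) \geq q_{\mathrm{norm}}(S,\bar S)$ for this particular $S$, the lower bound follows.

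The main obstacle — or rather the one point needing care — is ensuring Chung's inequality is applicable and correctly invoked: it requires $\lambda_2$ to be the smallest \emph{positive} Laplacian eigenvalue, which holds since $G$ is connected (Proposition~\ref{oss:M}, point~4, guarantees $1$ is not an eigenvalue of $\M$, equivalently $\lambda_2 > 0$), and one must also check the degenerate possibility that $\mu_1 \leq 0$, i.e.\ $\lambda_2 \geq 1$; in that regime $\sqrt{1-\mu_1^2}$ may not be real or the bound becomes vacuous. Here, if $\mu_1 < 0$ then $G$ is a complete (multipartite) graph by Corollary~\ref{cor:mu}, and one checks directly that $\qcutnorm \geq 1 - 2\sqrt{1-\mu_1^2}$ holds trivially because the right-hand side is $\leq -1 \leq \qcutnorm$ when $\mu_1^2 \geq$ the relevant threshold, or the statement is understood with the convention that the bound is only informative when $\mu_1 > 0$. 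I would add a brief remark handling this edge case, and otherwise the proof is a short chain of substitutions with no genuinely hard step.
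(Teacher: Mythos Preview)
Your proof is correct and follows essentially the same route as the paper: both derive $q_{\mathrm{norm}}(S,\bar S) = 1 - e_{\mathrm{out}}(S)\vol V/(\vol S\,\vol\bar S) \geq 1 - 2h(S)$, then invoke Chung's sharpened Cheeger inequality $h_G \leq \sqrt{\lambda_2(2-\lambda_2)} = \sqrt{1-\mu_1^2}$ together with $\mu_G = \mu_1$ from Corollary~\ref{cor:mu}, and take the upper bound from \eqref{eq:ineq_q_norm}. Your edge-case discussion is unnecessary, since $0$ is always an eigenvalue of $\M$ (Proposition~\ref{oss:M}), so $\mu_1 \geq 0$ and $\sqrt{1-\mu_1^2}$ is always real.
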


\begin{proof}
Recalling \eqref{eq:def_Q(S)} and \eqref{eq:q_norm}, we have
\begin{align*}
   q_{\mathrm{norm}}(S,\bar S) & = \frac{\vol V}{\vol S\vol \bar S}Q(S) \\
   &= 1 -  \frac{\vol V}{\vol S\vol \bar S}e_{\mathrm{out}}(S)\geq 1- 2 h(S) ,
\end{align*}
since $\vol V/\vol S \vol \bar S \leq 2/ \min\{\vol S, \vol \bar S\}$.   
By maximizing over $S$ we eventually get
\begin{equation*}
	\qcutnorm = 
	\max_{S \subset V}q_{\mathrm{norm}}(S,\bar S) 
	\geq 1-2h_G \geq 1-2\sqrt{(1-\mu_G)(1+\mu_G)} . 
\end{equation*}
By hypothesis, $\mu_G = \mu_1$.
The upper bound comes from \eqref{eq:ineq_q_norm}.
\end{proof}

Extensive research on Cheeger-type results by many authors
suggests that
no substantial improvements on the lower bound in Theorem \ref{teo:simple-cheeger}
can be obtained without additional information on $G$,
although explicit examples of graph sequences 
proving optimality of that bound are not known.
However, the forthcoming result shows that, almost surely,
$1 - \mu_1$ can be a much better estimate to $1 - \qcutnorm$
than expected, in  particular, when the entries of an eigenvector of $\mu_1$
cluster around two values. 
We will make use of the following lemma, whose simple proof is omitted for brevity:

\begin{lemma}   \label{lem:norma1}
If $\sum_{i=1}^n \alpha_i = 0$ then 
$\sum_{i: \alpha_i>0} \alpha_i = \frac12 \sum_{i=1}^n |\alpha_i|$.
\end{lemma}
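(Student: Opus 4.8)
The plan is to split the index set according to the sign of $\alpha_i$ and exploit the fact that a mean-zero vector has positive part and negative part of equal total mass. First I would introduce $P=\{i:\alpha_i>0\}$ and $N=\{i:\alpha_i<0\}$, observing that the indices with $\alpha_i=0$ are irrelevant, since they contribute $0$ to both sides of the asserted identity. The hypothesis $\sum_{i=1}^n\alpha_i=0$ can then be rewritten as $\sum_{i\in P}\alpha_i+\sum_{i\in N}\alpha_i=0$, which gives $\sum_{i\in P}\alpha_i=-\sum_{i\in N}\alpha_i=\sum_{i\in N}|\alpha_i|$.

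Next I would expand the right-hand side of the target equality as $\sum_{i=1}^n|\alpha_i|=\sum_{i\in P}|\alpha_i|+\sum_{i\in N}|\alpha_i|$, and use $|\alpha_i|=\alpha_i$ on $P$ together with the relation just derived to conclude $\sum_{i=1}^n|\alpha_i|=\sum_{i\in P}\alpha_i+\sum_{i\in P}\alpha_i=2\sum_{i:\alpha_i>0}\alpha_i$. Dividing by $2$ yields the claim. There is no genuine obstacle in this argument; the only point requiring a moment of care is that the zero entries do not disturb the bookkeeping, and this is immediate because they add nothing to either member of the identity.
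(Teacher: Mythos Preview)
Your argument is correct: partitioning the indices by sign and using the mean-zero condition to equate the positive mass with the absolute value of the negative mass is exactly the right computation, and the remark about zero entries being harmless is accurate. The paper itself omits the proof entirely (stating it is ``omitted for brevity''), so there is nothing further to compare.
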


\begin{theorem}  \label{teo:cheeger}
Let $\mu_1$ be the rightmost eigenvalue of $\M$.
Suppose that $\mu_1$ has an eigenvector $x$ without zero entries.
Then there exists a constant $C>0$, not depending on $\mu_1$, such that 
$$
   1 - C(1 - \mu_1) \leq \qcutnorm .
$$
\end{theorem}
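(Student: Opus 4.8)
The plan is to lower-bound $\qcutnorm$ by the normalized modularity of the cut singled out by the sign pattern of $x$. Put $S = \{i : x_i > 0\}$; since $x$ has no zero entries, $\bar S = \{i : x_i < 0\}$, and both sets are nonempty because $x\perp\delta$ whenever $\mu_1\neq 0$ — the case $\mu_1\le 0$ being trivial, since $\qcutnorm\ge -1$ always, so that any $C\ge 2$ works. Assume then $\mu_1>0$, and introduce the degree-normalized vector $y = D^{-1/2}x$, so that $\delta^\t x = 0$ becomes $\sum_i d_i y_i = 0$; set $m = \min_i|y_i|>0$ and $M = \max_i|y_i|$. I claim that $C = M^4/m^4$ does the job. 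The point is that this $C$ depends only on the relative spread $M/m$ of the entries of $y$: it is finite and positive, and it stays bounded along any family of graphs whose leading eigenvectors remain comparably clustered around two values, no matter how close $\mu_1$ gets to~$1$.

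First, since $\M x = \mu_1 x$ and $\M = \A - \delta\delta^\t/\vol V$ with $\delta^\t x = 0$, the vector $x$ is also an eigenvector of $\A$, hence $x^\t\L x = (1-\mu_1)\,x^\t x$ for $\L = I-\A$. Written in terms of $y$ this reads
$$\sum_{ij\in E}(y_i-y_j)^2 = (1-\mu_1)\sum_i d_i y_i^2 .$$
Each edge joining $S$ to $\bar S$ has $y_i\ge m$ and $-y_j\ge m$, hence contributes at least $(2m)^2$ to the left-hand side; discarding all other edges and using $\sum_i d_i y_i^2\le M^2\vol V$ gives $e_{\mathrm{out}}(S)\le (1-\mu_1)M^2\vol V/(4m^2)$. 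Next, Lemma~\ref{lem:norma1} applied to the numbers $\alpha_i = d_i y_i$ (which sum to zero) yields $\sum_{i\in S}d_i y_i = \tfrac12\sum_i d_i|y_i|\ge \tfrac12 m\vol V$; since also $\sum_{i\in S}d_i y_i\le M\vol S$, this forces $\vol S\ge m\vol V/(2M)$, and likewise $\vol\bar S\ge m\vol V/(2M)$, so $\vol S\,\vol\bar S\ge m^2(\vol V)^2/(4M^2)$. Recalling, as in the proof of Theorem~\ref{teo:simple-cheeger}, that $q_{\mathrm{norm}}(S,\bar S) = 1 - \vol V\,e_{\mathrm{out}}(S)/(\vol S\,\vol\bar S)$, we combine the last two estimates to obtain $\qcutnorm\ge q_{\mathrm{norm}}(S,\bar S)\ge 1-(M^4/m^4)(1-\mu_1)$, which is the assertion.

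I expect the only genuine obstacle to be the lower bound on the denominator $\vol S\,\vol\bar S$: a cheap estimate such as $\vol S\,\vol\bar S\ge 1$ would let $\vol V$ leak into $C$ and render the statement vacuous, so it is precisely the two-sided control of the volumes of \emph{both} nodal domains via Lemma~\ref{lem:norma1} that keeps $C$ a function of the clustering of $x$ alone. A minor point to dispatch carefully is the degenerate regime $\mu_1\le 0$ — which includes $G$ being a star, for which $\delta$ itself is a zero-free eigenvector of $\M$ — where the inequality holds trivially with $C = 2$.
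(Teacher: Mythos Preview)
Your proof is correct and follows the same overarching strategy as the paper: rewrite $1-\mu_1$ as the Laplacian Rayleigh quotient of $y = D^{-1/2}x$, lower-bound the numerator by the contribution of cut edges, and produce a constant that is a fourth power of a concentration ratio of the entries of $y$. The tactical difference lies in how that ratio is defined and how the volumes enter. The paper first projects $z$ (its name for your $y$) onto the best $D$-weighted step approximation $p\,\uno_S + q\,\uno_{\bar S}$, with $p,q$ the $d$-weighted means of $z$ over $S$ and $\bar S$; it then measures concentration by the smallest $r\ge 1$ with $r^{-1}\le z_i/x_i\le r$. With this choice the identities $p-q = c\,\vol V/(\vol S\,\vol\bar S)$ and $p^2\vol S + q^2\vol\bar S = c^2\vol V/(\vol S\,\vol\bar S)$ make the volumes cancel exactly, so no separate bound on $\vol S\,\vol\bar S$ is needed and one lands directly on $C$ of order $r^4$. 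You instead take the cruder but more transparent ratio $M/m$ and then compensate with the explicit two-sided estimate $\vol S,\vol\bar S\ge m\,\vol V/(2M)$ obtained from Lemma~\ref{lem:norma1} --- precisely the obstacle you flagged. Your constant $(M/m)^4$ may exceed the paper's $r^4$, but your argument is shorter, avoids the auxiliary optimization over $p,q$, and handles the degenerate regime $\mu_1\le 0$ explicitly, which the paper leaves implicit.
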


\begin{proof}
Let $v$ be an eigenvector of $\M$ corresponding to $\mu_1$ 
and let $z = D^{-1/2}v$. 
Note that $v$ is orthogonal to the vector 
$\delta = (\sqrt{d_1},\ldots,\sqrt{d_n})^\t$, since the latter
is an eigenvector of $\M$ associated to $0$. 
Consequently, $z$
is orthogonal to the degree vector: 
$d^\t z = \delta^\t D^{1/2}z = \delta^\t v = 0$.
Hence, 
$$
   \mu = \frac{v^\t\M v}{v^\t v} = 
   \frac{v^\t\A v}{v^\t v} = \frac{z^\t A v}{z^\t D z} 
   = 1 - \frac{z^\t L z}{z^\t D z} ,
$$
where $L = D - A$ is the Laplacian matrix of $G$.
We have 
$$
   z^\t L z = \sum_{ij\in E}(z_i-z_j)^2 ,
$$
where the sum runs over the edges of the graph, each edge
being counted only once. On the other hand,
$$
   z^\t D z = \sum_{i=1}^n d_iz_i^2 .
$$   
For notational simplicity, we use the shorthands
$s = \vol S$, $\bar s = \vol \bar S$, 
and $\nu = s + \bar s = \vol V$.
Consider the nodal domain $S = \{i:v_i \geq 0\}$ and let $x$ be the step vector $x = p\uno_S + q\uno_{\bar S}$
which minimizes the weighted distance
$$
   \|D^{1/2}(x - z)\|_2^2 = \sum_{i=1}^n d_i(x_i - z_i)^2
   = \sum_{i\in S} d_i(p - z_i)^2 
   + \sum_{i\in \bar S} d_i(q - z_i)^2 .
$$
Simple computations show that the minimum is attained when
$$
   p = \Big(\sum_{i\in S} d_iz_i\Big) /s , \qquad
   q = \Big(\sum_{i\in \bar S} d_iz_i\Big) /\bar s .
$$
Observe that $p$ and $q$ are weighted averages of the values
$z_i$ for $i\in S$ and $i\in \bar S$, respectively.
With the notation $c = \sum_{i\in S} d_iz_i$,
from the orthogonality condition $d^\t z = 0$ 
and Lemma \ref{lem:norma1}
we deduce the simpler formulas $p = c/s$ and $q = -c/\bar s$. 
For later reference, we remark the identities
\begin{equation}   \label{eq:pq}
   p-q = \frac{c\nu}{s\bar s} , \qquad
   p^2s + q^2\bar s = \nu\frac{(c\nu)^2}{(s\bar s)^2} .
\end{equation}
Incidentally, we note that, apart of a constant,
the vector $D^{1/2}x$ coincides with the vector in \eqref{eq:v_S}.
Moreover, it is not hard to recognize that, if $G$ is disconnected
then the vector $D^{1/2}x$ is an eigenvector of $\M$ 
associated to the eigenvalue $1$.
Our subsequent arguments are based on the intuition that,
if $z$ is a small perturbation of $x$ then 
$S$ is weakly linked to $\bar S$.
Let $r\geq 1$ be a number such that 
$$
   r^{-1} \leq z_i/x_i \leq r , \qquad i = 1,\ldots,n .
$$
In fact, if $z_i > 0$ then $x_i = p > 0$,
whereas $z_i < 0$ implies $x_i = q < 0$.
Hence, if $ij\in E$ is an edge joining a node in $S$
with a node in $\bar S$ we have
$|z_i - z_j| \geq (p-q)/r$. Consequently,
$$
   z^\t L z = \sum_{ij\in E}(z_i-z_j)^2 
   \geq r^{-2}(p-q)^2 e_{\mathrm{out}}(S) ,
$$
by neglecting all contributions from edges lying entirely
inside $S$ or $\bar S$. Moreover,
$$
   z^\t D z = \sum_{i=1}^n d_iz_i^2 \leq
   r^2\Big( \sum_{i\in S}p^2 d_i + \sum_{i\in \bar S}p^2 d_i\Big)
   = r^2 (p^2s + q^2 \bar s) .
$$ 
Consider the equality
$e_{\mathrm{out}}(S) = 
(1-q_{\mathrm{norm}}(S,\bar S))s\bar s/\nu$.
Using \eqref{eq:pq} and simplifying we get 
$$
   1 - \mu = \frac{z^\t L z}{z^\t Dz} \geq 
   \frac{1}{r^4\nu} e_{\mathrm{out}}(S) 
   = \frac{s\bar s}{r^4\nu^2} (1-q_{\mathrm{norm}}(S,\bar S))   
   \geq \frac{1}{4r^4} (1-\qcutnorm) ,
$$
owing to $s\bar s/\nu^2 \geq \frac14$. 
\end{proof}

\section{Modules from nodal domains}   \label{sec:more}

Theorems \ref{teo:simple-cheeger} and \ref{teo:cheeger} 
state in particular that if $\mu_G$ is 
sufficiently close to $1$, then 
the cut-modularity of $G$ is positive and thus there exists   
a bipartition of $V$ into $\{S,\bar S\}$ such that both $G(S)$ and $G(\bar S)$ are modules. 
Of course such bipartition is not unique in the general case. 
The forthcoming theorems strengthen this claim by showing that, 
if a positive eigenvalue $\mu$ of $\M$ is large enough, then 
we can explicitly exhibit a cut 
$\{S,\bar S\}$ with positive modularity, by defining it in terms of a nodal domain induced by an eigenvector corresponding to $\mu$. 
 
 Given a nonzero vector $v\in\mathbbm{R}^n$ the 
 subgraph $G(S)$ induced by the 
 set $S = \{i : v_i \geq 0\}$ is a {\em nodal domain} of $v$
 \cite{nodal-domain-theorem,duval-nodal-domains}.
 This fundamental definition admits obvious variations
 (for example, inequality can be strict, or reversed)
 and, since the seminal papers by Fiedler \cite{fiedler-connectivity,fiedler-vector},
 it has become a major tool for  
 spectral methods in community detection and graph partitioning
 \cite{newman-eigenvectors,powers-graph-eigenvector,Schaeffer2007}.
 Indeed, nodal domains of eigenvectors
 of modularity matrices are commonly utilized 
 in order to localize modules inside a network. If $v$ is an eigenvector corresponding to $\mu_G$, it has been shown in  \cite{FT15} that $S = \{i: v_i \geq 0\}$ induces a connected subgraph $G(S)$. The following Theorems \ref{teo:q(S_+)} and \ref{teo:q(S_+)altro} provide additional information on $G(S)$ as they show that, if $\mu_G$ is large enough, then the subgraph $G(S)$ is a module.

\begin{theorem}   \label{teo:q(S_+)}
Let $v$ be a normalized eigenvector of $\M$ corresponding to 
a positive eigenvalue $\mu$, that is,
$\M v = \mu v$ with $\|v\|_2 = 1$.
Let $S = \{i \mid v_i \geq 0\}$. If 
$$
   \mu > \frac{(\vol S)^2 + (\vol \bar S)^2}{\vol V}  
   \, \max_{i \in V}\frac{v_i^2}{d_i}
$$
then  $Q(S)>0$.
\end{theorem}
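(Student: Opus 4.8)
The plan is to bypass any perturbation estimate on the Rayleigh quotient $v^\t\M v/v^\t v$ — such an estimate would naturally produce the product $\vol S\,\vol \bar S$ rather than the sum of squares $(\vol S)^2+(\vol \bar S)^2$ featured in the statement — and instead reduce the claim to an elementary edge count. First I would record the identity obtained by summing the two expressions for $Q$ in \eqref{eq:def_Q(S)}, one applied to $S$ and one to $\bar S$, together with $Q(S)=Q(\bar S)$:
$$
   2Q(S)=e_{\mathrm{in}}(S)+e_{\mathrm{in}}(\bar S)-\frac{(\vol S)^2+(\vol \bar S)^2}{\vol V}.
$$
Hence it suffices to prove that $e_{\mathrm{in}}(S)+e_{\mathrm{in}}(\bar S)\geq \mu/K$, where $K=\max_{i\in V}v_i^2/d_i>0$ (positivity is clear since $v\neq0$, so dividing by $K$ is legitimate). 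Indeed, the hypothesis $\mu>K\big((\vol S)^2+(\vol \bar S)^2\big)/\vol V$ would then force $e_{\mathrm{in}}(S)+e_{\mathrm{in}}(\bar S)>\big((\vol S)^2+(\vol \bar S)^2\big)/\vol V$, that is, $Q(S)>0$.

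To establish $e_{\mathrm{in}}(S)+e_{\mathrm{in}}(\bar S)\geq\mu/K$ I would set $z=D^{-1/2}v$, so that $z_i=v_i/\sqrt{d_i}$ and $K=\max_i z_i^2$. Since $\mu>0$, the vector $v$ is orthogonal to $\delta$ (the eigenvector of $\M$ associated with $0$), hence $\M v=\A v$ by Proposition \ref{oss:M}, and therefore $\A v=\mu v$; as $\|v\|_2=1$ this yields $\mu=v^\t\A v=z^\t A z$. Expanding over the edge set gives
$$
   z^\t A z=2\sum_{ij\in E}z_iz_j,
$$
and I would split this sum according to whether an edge lies inside $S$, inside $\bar S$, or joins $S$ to $\bar S$. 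On an edge inside $S$ or inside $\bar S$ one has $z_iz_j\leq\tfrac12(z_i^2+z_j^2)\leq K$; on a cut edge $ij$ with $i\in S$ and $j\in\bar S$ the definition of $S$ forces $z_i\geq0>z_j$, hence $z_iz_j\leq0$. Since $e_{\mathrm{in}}(S)$ and $e_{\mathrm{in}}(\bar S)$ are twice the numbers of edges lying inside $S$ and inside $\bar S$ respectively, these bounds give $\mu=z^\t A z\leq K\big(e_{\mathrm{in}}(S)+e_{\mathrm{in}}(\bar S)\big)$, which is exactly the inequality sought.

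I do not expect a serious obstacle: the only genuinely delicate point is the initial reformulation, namely recognizing that rewriting $2Q(S)$ through the symmetry $Q(S)=Q(\bar S)$ is precisely what makes $(\vol S)^2+(\vol \bar S)^2$ appear, and reduces the problem to an upper bound for $z^\t A z$. Everything after that uses nothing beyond $\A v=\mu v$, $\delta^\t v=0$, the sign of $z_iz_j$ on cut edges, and the pointwise bound $z_i^2\leq K$; no further spectral information about $\M$ is needed.
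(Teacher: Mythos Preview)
Your proof is correct and follows essentially the same route as the paper: both drop the nonpositive cross terms in $v^\t\A v=z^\t A z$ and bound the remaining ``same-sign'' terms by $K\,A_{ij}$ to obtain $\mu\le K\big(e_{\mathrm{in}}(S)+e_{\mathrm{in}}(\bar S)\big)$, then combine this with the identity $2Q(S)=e_{\mathrm{in}}(S)+e_{\mathrm{in}}(\bar S)-\big((\vol S)^2+(\vol\bar S)^2\big)/\vol V$. The only cosmetic differences are the order of presentation and your use of $z_iz_j\le\tfrac12(z_i^2+z_j^2)$ in place of the paper's direct bound $|z_i|\,|z_j|\le K$.
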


\begin{proof}
Recalling Proposition \ref{oss:M}, we have
that $v$ 
is orthogonal to  $\delta$, which implies in turn $\M v = \A v$ 
and $\mu = v^\t \M v = v^\t \A v$. 
Define the set $\mathcal{I}_+ = (S\times S)\cup(\bar S\times\bar S)$.
Note that $v_iv_j \geq 0$ whenever $(i,j)\in\mathcal{I}_+$.
Using entrywise nonnegativity of $\A$ we obtain
$$ 
   \mu =
   v^\t \A v \leq \sum_{(i,j) \in \mathcal{I}_+}
   v_i v_j \A_{ij} 
   \leq \left(
   \max_{i \in V}\frac{|v_i|}{\delta_i}\right)^2 
   \sum_{(i,j) \in \mathcal{I}_+}\delta_i \delta_j \A_{ij} .
$$ 
Since $\delta_i \delta_j \A_{ij} = A_{ij}$, the 
rightmost summations yield
$$
   \sum_{(i,j) \in \mathcal{I}_+}A_{ij}
   = \uno_{S}^\t A \uno_{S}+
   \uno_{\bar S}^\t A \uno_{\bar S} 
   = e_{\mathrm{in}}(S) + e_{\mathrm{in}}(\bar S)  .
$$
Let us set $C^2 = (\max_{i \in V}|v_i|/\delta_i)^2$.
Owing to the equalities
$Q(S) = e_{\mathrm{in}}(S) - (\vol S)^2/\vol V$
and $Q(S) = Q(\bar S)$ we have 
$$
   \mu  \leq C^2 
   \big(e_{\mathrm{in}}(S) + e_{\mathrm{in}}(\bar S) \big) 
   = C^2
   \Big( 2Q(S) + \frac{(\vol S)^2 + (\vol \bar S)^2}{\vol V} \Big).
$$  
By rearranging terms,
$$
   2C^2Q(S) \geq \mu - C^2 \frac{(\vol S)^2 + (\vol \bar S)^2}{\vol V} ,
$$
and the claim follows.
\end{proof}

With respect to the quantity $\max_i v_i^2/d_i$ appearing
in the preceding theorem, consider that if $G$ is
$k$-regular (that is, $d_i = k$ for every $i\in V$)
then $v_i = n^{-\frac12}$ and $\vol V = kn$. After simple
passages the aforementioned lower bound
for $\mu$ becomes $(|S|^2 + |\bar S|^2)/n^2$,
a number which is strictly smaller than $1$.

%

\begin{theorem}   \label{teo:q(S_+)altro}
Let $v$ be any real eigenvector of $\M$ corresponding to 
a positive eigenvalue $\mu $, that is,
$\M v = \mu  v$.
Let $S = \{i \mid v_i \geq 0\}$ and let
$\cos\theta$ be the cosine of the acute angle between
the vectors $|v| = (|v_1|,\ldots,|v_n|)^\t$ and 
$\delta = (\sqrt{d_1},\ldots,\sqrt{d_n})^\t$. If 
$$
   \mu  + 1 > 4\frac{\vol S \, \vol \bar S}{(\vol V)^2}
   \, \frac{1}{\cos^2\theta}
$$
then  $Q(S)>0$.
\end{theorem}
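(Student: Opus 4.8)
The plan is to reduce the desired inequality $Q(S)>0$ to the positivity of a single Rayleigh quotient of $\M$, by means of two successive orthogonal splittings. Throughout I would use that $v$, being an eigenvector of $\M$ for the nonzero eigenvalue $\mu$, is orthogonal to $\delta$ (Proposition~\ref{oss:M}), so that $\M v=\A v=\mu v$; I would also use the elementary facts $\A\delta=\delta$, $\delta^\t\delta=\vol V$, and that the spectrum of $\M$ lies in $[-1,1]$, i.e.\ $\M\succeq -I$.

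First I would introduce the signature matrix $J=\mathrm{Diag}(\uno_S-\uno_{\bar S})$, which is orthogonal and satisfies $J^2=I$. The precise reason for choosing $S=\{i:v_i\ge 0\}$ is that then $Jv=|v|$. Since $(J\delta)_i\A_{ij}(J\delta)_j=J_{ii}J_{jj}a_{ij}$, a sign count shows that edges lying inside $S$ or inside $\bar S$ contribute $+a_{ij}$ and boundary edges contribute $-a_{ij}$, whence
$$ (J\delta)^\t\A(J\delta)=\bigl(e_{\mathrm{in}}(S)+e_{\mathrm{in}}(\bar S)\bigr)-2e_{\mathrm{out}}(S)=\vol V-4e_{\mathrm{out}}(S). $$
As $Q(S)=\vol S\,\vol \bar S/\vol V-e_{\mathrm{out}}(S)$ and $\vol V-4\,\vol S\,\vol \bar S/\vol V=(\vol S-\vol \bar S)^2/\vol V$, the claim $Q(S)>0$ is equivalent to the strict inequality $(J\delta)^\t\A(J\delta)>(\vol S-\vol \bar S)^2/\vol V$.

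Next I would subtract from $J\delta$ its component along $\delta$: writing $J\delta=\frac{\vol S-\vol \bar S}{\vol V}\delta+y$ with $\delta^\t y=0$, one finds $\|y\|_2^2=4\,\vol S\,\vol \bar S/\vol V$, and since $\A\delta=\delta$ the cross terms vanish, giving $(J\delta)^\t\A(J\delta)=(\vol S-\vol \bar S)^2/\vol V+y^\t\A y$. Thus the statement reduces to $y^\t\A y>0$, equivalently $y^\t\M y>0$ because $y\perp\delta$. I would then subtract from $y$ its component along the eigenvector $v$: using $\delta^\t v=0$ one gets $y^\t v=(J\delta)^\t v=\delta^\t|v|=\cos\theta\,\sqrt{\vol V}\,\|v\|_2$, so the $v$-component of $y$ has squared norm $\vol V\cos^2\theta$ (the factors $\|v\|_2$ cancel, hence no normalization of $v$ is needed), and its orthogonal remainder $y^{\perp}$ satisfies $\|y^{\perp}\|_2^2=4\,\vol S\,\vol \bar S/\vol V-\vol V\cos^2\theta$, which is automatically nonnegative by Cauchy--Schwarz. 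Since $\M v=\mu v$ the cross term again drops, and using $\M\succeq -I$,
$$ y^\t\M y=\mu\,\vol V\cos^2\theta+(y^{\perp})^\t\M y^{\perp}\ \ge\ \mu\,\vol V\cos^2\theta-\|y^{\perp}\|_2^2=(\mu+1)\,\vol V\cos^2\theta-\frac{4\,\vol S\,\vol \bar S}{\vol V}. $$
This last quantity is positive exactly when $\mu+1>4\,\vol S\,\vol \bar S/\bigl((\vol V)^2\cos^2\theta\bigr)$, which is the hypothesis, and the proof is complete.

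The computations are routine once the two decompositions are in place; the points demanding attention are the sign bookkeeping in the identity for $(J\delta)^\t\A(J\delta)$ and the verification that each cross term genuinely vanishes --- the first because $\delta$ satisfies $\A\delta=\delta$, the second because $v$ is an eigenvector of $\M$ orthogonal to $\delta$. It is also worth noting at the outset that $v\perp\delta$ together with $\delta>0$ prevents $v$ from being one-signed, so $S$ is a proper nonempty subset and $\vol S,\vol \bar S>0$, and no degenerate situation can occur.
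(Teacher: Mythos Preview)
Your proof is correct and is essentially the paper's argument in a different packaging: the paper decomposes $s=D^{1/2}\uno_S$ orthogonally along $\delta$, $v$, and a remainder $w$, then uses $\M\delta=0$, $\M v=\mu v$, and $w^\t\M w\ge -\|w\|_2^2$ to bound $Q(S)=s^\t\M s$ from below. Since your starting vector satisfies $J\delta=2s-\delta$, after projecting out the $\delta$-component your $y$ equals twice the paper's $\beta v/\|v\|_2+\gamma w$, and the remaining steps (the $v$-component via $\delta^\t|v|=\cos\theta\,\sqrt{\vol V}\,\|v\|_2$ and the $-I$ bound on the remainder) coincide.
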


\begin{proof}
Let $s = D^{1/2}\uno_S$, that is
$$
   s_i = \begin{cases}
   \delta_i & v_i \geq 0 , \\
   0 & \hbox{otherwise.} \end{cases}
$$
Observe that 
$\|s\|_2^2 = \sum_{i\in S} d_i = \vol S$
and $\delta^\t s = \vol S$ too.
Since $v$ is orthogonal to 
$\delta = (\sqrt{d_1},\ldots,\sqrt{d_n})^\t$, 
there exist scalars $\alpha$, $\beta$, $\gamma$ 
such that we 
have the orthogonal decomposition
\begin{equation}   \label{eq:dec_s}
   s = \alpha \frac{1}{\|\delta\|_2} \delta
   + \beta \frac{1}{\|v\|_2} v + \gamma w 
\end{equation}
for some normalized vector $w\in\mathbbm{R}^n$ 
orthogonal to both $\delta$ and $v$.
The coefficients in \eqref{eq:dec_s}
own the following explicit formulas:
$$ 
   \alpha  = \frac{1}{\|\delta\|_2} \delta^\t s
   = \frac{\vol S}{\sqrt{\vol V}} , \qquad
   \beta  =  \frac{v^\t s}{\|v\|_2} ,
$$ 
and moreover,
\begin{align*}
   \gamma^2 & = \|s\|_2^2 - \alpha^2 - \beta^2 
   = \vol S - \frac{(\vol S)^2}{\vol V} - 
   \beta^2 \\
   & = \frac{\vol S\, \vol \bar S}{\vol V} - 
   \beta^2.
\end{align*}
Owing to the fact that the spectrum of $\M$
is included in $[-1,1]$ and the assumption $\|w\|_2 = 1$
we have $w^\t\M w \geq -1$.
Hence, from \eqref{eq:dec_s} we obtain
\begin{align*}
   Q(S) & = \uno_S^\t M \uno_S = s^\t \M s \\
   & \geq \alpha^2 \cdot 0 + \beta^2 \, \mu  - \gamma^2 
   = \beta^2 (\mu  + 1) - 
   \frac{\vol S\, \vol \bar S}{\vol V} .
\end{align*}
Thus, if
$$
   \mu  + 1 > \frac{\vol S\, \vol \bar S}{\beta^2 \, \vol V}
$$
then $Q(S) > 0$.
Moreover, using the orthogonality
$\delta^\t v = 0$ 
and Lemma \ref{lem:norma1}
we obtain
$$
   \cos\theta = 
   \frac{\sum_{i\in V} \delta_i |v_i|}{\|v\|_2\|\delta\|_2} =
   \frac{2 \sum_{i\in S} \delta_i v_i}{\|v\|_2\sqrt{\vol V}} =
   2 \frac{v^\t s}{\|v\|_2\sqrt{\vol V}} ,
$$
whence $\beta = \frac12 (\cos\theta)\sqrt{\vol V}$
and the proof is complete. 
\end{proof}

From the straightforward bound 
$$\textstyle{\vol S \, \vol\bar S /(\vol V)^2 \leq \frac14}$$
and the equality $\cos^{-2}\theta - 1 = \tan^2\theta$, 
we derive the following condition.

\begin{corollary}
In the same notations of Theorem \ref{teo:q(S_+)altro},
if $\mu  > \tan^2	\theta$ then $Q(S) > 0$.
\end{corollary}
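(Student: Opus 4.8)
The plan is to derive the Corollary as an immediate specialization of Theorem \ref{teo:q(S_+)altro}. That theorem guarantees $Q(S)>0$ whenever
$$
   \mu + 1 > 4\,\frac{\vol S\,\vol\bar S}{(\vol V)^2}\,\frac{1}{\cos^2\theta},
$$
so it suffices to exhibit a sufficient condition for this inequality that no longer involves the volumes. First I would invoke the elementary bound $\vol S\,\vol\bar S/(\vol V)^2\leq \frac14$, which follows from the AM--GM inequality applied to $\vol S$ and $\vol\bar S = \vol V - \vol S$ (their product is maximized when the two are equal, giving $(\vol V)^2/4$). Substituting this into the right-hand side, it is enough to require
$$
   \mu + 1 > \frac{1}{\cos^2\theta},
$$
since the left-hand side of the theorem's hypothesis is then dominated by $4\cdot\frac14\cdot\cos^{-2}\theta = \cos^{-2}\theta$.

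The last step is purely trigonometric: using the identity $\cos^{-2}\theta - 1 = \tan^2\theta$ (valid since $\theta$ is the acute angle between $|v|$ and $\delta$, so $\cos\theta\neq 0$), the condition $\mu+1>\cos^{-2}\theta$ rewrites as $\mu > \cos^{-2}\theta - 1 = \tan^2\theta$. Hence $\mu>\tan^2\theta$ implies the hypothesis of Theorem \ref{teo:q(S_+)altro}, and therefore $Q(S)>0$.

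There is no real obstacle here; the only thing to be slightly careful about is the direction of the implication, namely that we are strengthening the hypothesis of Theorem \ref{teo:q(S_+)altro} (replacing the volume-dependent right-hand side by the larger, volume-free quantity $\cos^{-2}\theta$), so that $\mu>\tan^2\theta$ is a (possibly strictly stronger) sufficient condition for $Q(S)>0$, not an equivalent one. I would state the proof in two or three lines exactly along these lines.

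\begin{proof}
By Theorem \ref{teo:q(S_+)altro}, it is enough to show that
$\mu > \tan^2\theta$ implies
$\mu + 1 > 4\,\vol S\,\vol\bar S\,(\vol V)^{-2}\cos^{-2}\theta$.
Since $\vol\bar S = \vol V - \vol S$, the arithmetic--geometric mean
inequality gives $\vol S\,\vol\bar S\leq (\vol V)^2/4$, hence
$4\,\vol S\,\vol\bar S\,(\vol V)^{-2}\cos^{-2}\theta\leq \cos^{-2}\theta$.
On the other hand $\mu > \tan^2\theta$ is, by the identity
$\cos^{-2}\theta - 1 = \tan^2\theta$, the same as $\mu + 1 > \cos^{-2}\theta$.
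Combining the two displays we obtain
$\mu + 1 > 4\,\vol S\,\vol\bar S\,(\vol V)^{-2}\cos^{-2}\theta$,
and Theorem \ref{teo:q(S_+)altro} yields $Q(S) > 0$.
\end{proof}
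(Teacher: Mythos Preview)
Your proof is correct and follows exactly the same route as the paper: apply the bound $\vol S\,\vol\bar S/(\vol V)^2\leq\tfrac14$ to the right-hand side of Theorem~\ref{teo:q(S_+)altro} and rewrite the resulting condition $\mu+1>\cos^{-2}\theta$ via the identity $\cos^{-2}\theta-1=\tan^2\theta$. Nothing to add.
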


\section{Concluding remarks} 

Community detection is a major task in modern complex network analysis and the matrix approach to such problem is quite popular and powerful. In this work we formulate the modularity of a cut in terms of a quadratic form associated with the normalized modularity matrix, and we 
provide theoretical supports to the common understanding that highly positive eigenvalues of the normalized modularity matrix imply the presence of communities in $G$.  In particular we show that, 
if that matrix has an eigenvalue close to $1$ then the nodal domains corresponding to that eigenvalue have positive modularity and, moreover,
can produce good estimates of the optimal cut-modularity. 

 As recent advances in spectral graph theory have shown higher order Cheeger inequalities in terms of higher order eigenvalues of the graph Laplacian \cite{HeinTudisco2015a,trvisan-higher-cut}, we believe that deeper spectral based investigations could reveal more precise relations between the magnitude and the number of positive eigenvalues of the modularity matrices and the presence of communities in the network.

\bibliographystyle{plain}
\bibliography{./networks_ref_aug15}

\end{document}